\newtheorem{thm}{Theorem}
\newtheorem{lem}{Lemma}
\newcommand{\al}{\alpha}
\title{The monic Laguerre polynomials preserve real-rootedness}
\author{Praveen S. Venkataramana}
\date{\today}
\begin{document}

\begin{abstract}
Let $L_n(x)$ and $L_n^\alpha(x)$ be the $n$th Laguerre and associated Laguerre polynomial respectively. Fisk proved that the linear operator sending $x^n$ to $L_n(x)$ preserves real-rootedness. In this note we prove a stronger result; namely, that when $\alpha\ge 0$, the linear operator sending $x^n$ to $(-1)^n n! L_n^\alpha(x)$ preserves real-rootedness.
\end{abstract}

\maketitle

Let $\alpha$ be a nonnegative real number. The classical Laguerre polynomials are defined as follows:

$$L_n^\alpha(x) := \sum_{i=0}^n (-1)^i \binom{n+\alpha}{n-i} \frac{x^i}{i!}$$
Thus $(-1)^nn!L_n$ is monic for every $n$. Likewise, we define the \emph{scaled Hermite polynomials} as follows:

$$H_n^\xi(x) = e^{-\xi \ d^2/dx^2}x^n$$

In this note we prove the following:

\begin{thm}
If $N>0$ and $P(x) = \sum_{i=0}^N a_i x^i$ has all real roots, so does the polynomial $\sum_{i=0}^N (-1)^i i!a_iL_i(x)$.
\end{thm}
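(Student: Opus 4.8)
The plan hinges on the closed form $(-1)^{n}n!\,L_{n}(x) = (1-D)^{n}x^{n}$, where $D = d/dx$: expanding $(1-D)^{n} = \sum_{k}\binom{n}{k}(-D)^{k}$ and using $D^{k}x^{n} = \frac{n!}{(n-k)!}x^{n-k}$ gives $(1-D)^{n}x^{n} = \sum_{k}(-1)^{k}\binom{n}{k}\frac{n!}{(n-k)!}x^{n-k}$, which matches $(-1)^{n}n!$ times the series defining $L_{n}$ once one substitutes $i = n-k$. Writing $T$ for the operator of the theorem, we therefore have $T\big(\sum_{i}a_{i}x^{i}\big) = \sum_{i}a_{i}(1-D)^{i}x^{i}$, and the goal is to show $T$ carries real-rooted polynomials to real-rooted polynomials.

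I would assemble $T$ from two elementary real-rootedness preservers. First, for every real $t$ the operator $1 + tD$ is a preserver: if $p$ has only real roots $r_{1}\le\cdots\le r_{n}$ (the general case following by continuity from the simple-root case), then $p + tp' = p\cdot\big(1 + t\sum_{j}(x-r_{j})^{-1}\big)$, and a count of the sign changes of $1 + t\sum_{j}(x-r_{j})^{-1}$ between consecutive poles and on the unbounded side produces $\deg p$ real zeros, while the leading coefficient of $p$ is unchanged. Second, for $\xi \ge 0$ the scaled Hermite operator $e^{-\xi D^{2}}$ is a preserver: write $e^{-\xi D^{2}} = \lim_{m\to\infty}(1 - \tfrac{\xi}{m}D^{2})^{m}$, factor $1 - cD^{2} = (1+\sqrt{c}\,D)(1-\sqrt{c}\,D)$ into two copies of the first kind of preserver, and invoke Hurwitz's theorem; because each factor $1 - cD^{2}$ fixes the leading coefficient, no roots escape to infinity in the limit. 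In particular $x^{n}\mapsto H_{n}^{\xi}(x)$ preserves real-rootedness.

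The substantive step is to present $x^{n}\mapsto(1-D)^{n}x^{n}$ as a limit of finite compositions of such preservers, together with affine substitutions $x\mapsto ax+b$, $a\neq 0$, which are harmless. The suggestive identity is $(1-D)^{n} = e^{-nD}\cdot e^{-(n/2)D^{2}}\cdot e^{-(n/3)D^{3}}\cdots$ (the factors commute, since $\log(1-D) = -D - \tfrac12D^{2} - \tfrac13D^{3} - \cdots$): the first factor translates by $-n$, the second is $e^{-\xi D^{2}}$ at $\xi = n/2$, and the rest are higher-order corrections. Truncating the product after the quadratic factor produces, degree by degree, the truncated shadow Hermite polynomials, and $x^{n}\mapsto e^{-nD}e^{-(n/2)D^{2}}x^{n}$ is a preserver by the previous paragraph. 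I would then aim to reinstate the higher-order corrections, one bounded block at a time, by inserting further $1 + tD$ factors that absorb them without leaving the class of preservers, recovering $T$ in a Hurwitz limit with controlled degree and leading coefficient.

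The hard part will be precisely this reinstatement: the factors $e^{-(n/k)D^{k}}$ with $k\ge 3$ are not preservers by themselves — already $e^{D^{3}}x^{3} = x^{3}+6$ has non-real roots — so the displayed product cannot be used verbatim. The real content is the bookkeeping behind the shadow construction, which must reorganize the approximation of $(1-D)^{n}x^{n}$ so that only the two elementary preservers occur, while keeping the degree and leading coefficient of $T(P)$ from degenerating along the way, so that Hurwitz's theorem applies at the end.
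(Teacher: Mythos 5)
Your starting identity $(-1)^n n!\,L_n(x)=(1-D)^n x^n$ is correct, and your two elementary preservers ($1+tD$ by counting sign changes of the logarithmic derivative, and $e^{-\xi D^2}$ as a Hurwitz limit of $(1-\tfrac{\xi}{m}D^2)^m$) are both sound. The gap is in the assembly. The operator $T$ of the theorem is \emph{diagonal with a degree-dependent symbol}: it applies $(1-D)^n$ to the monomial $x^n$, with the exponent (equivalently, the parameters $n$, $n/2$, $n/3,\dots$ in your factorization) changing from term to term. A sum of different real-rootedness preservers applied to the different monomials of $P$ is not a preserver, so your claim that ``$x^n\mapsto e^{-nD}e^{-(n/2)D^2}x^n$ is a preserver by the previous paragraph'' does not follow: that paragraph treats $e^{-\xi D^2}$ for a \emph{fixed} $\xi$, whereas here $\xi=n/2$ varies with the monomial. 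Already the simpler degree-dependent shift $x^n\mapsto (x-n)^n$ (that is, $x^n \mapsto e^{-nD}x^n$, the first factor of your product) fails: it sends $(x+1)^2=x^2+2x+1$ to $(x-2)^2+2(x-1)+1=x^2-2x+3$, which has no real roots. Since you also concede that the cubic and higher factors are not preservers and that their ``reinstatement'' is unresolved, the central step of the argument is missing rather than merely unpolished.

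The paper avoids this difficulty entirely by trading the degree-dependent constant-coefficient factorization for a single variable-coefficient operator: with $\Lambda_x=x\frac{d^2}{dx^2}+(\alpha+1)\frac{d}{dx}$ one has $e^{-\Lambda_x}x^n=(-1)^n n!\,L_n^\alpha(x)$ uniformly in $n$, so $T=e^{-\Lambda_x}$ acts on all of $P$ at once. Real-rootedness is then propagated along the flow $h\mapsto e^{-h\Lambda_x}P$: for small $h$ the polynomial, blown up near each root $\xi_i$ of $P$ at scale $\sqrt{h}$ (scale $h$ at the origin), converges to a scaled Hermite (respectively Laguerre) polynomial, which has simple real roots; a connectedness argument in $h$ then pushes the conclusion to $h=1$. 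If you want to salvage your line of attack, you need either a uniform operator identity of this kind or a genuinely different mechanism for handling the degree dependence; the two elementary preservers alone will not reach $T$.
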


We first notice that:

$$ (-1)^n n! L_n^\alpha (x) = \exp\left(-x\frac{d^2}{dx^2} - (\alpha + 1)\frac{d}{dx}\right)x^n$$

To prove this, we define $\Lambda_x := x\frac{d^2}{dx^2} + (\alpha + 1)\frac{d}{dx}$ and note that $\Lambda_xx^k=k(k+\al)x^{k-1}$ for any integer $k$. Thus:

\begin{align*}
\exp\left(x\frac{d^2}{dx^2} + (\alpha + 1)\frac{d}{dx}\right)x^n &= \sum_{j=0}^\infty (-1)^j \frac{\Lambda_x^j}{j!}x^n\\
&= \sum_{j=0}^n \frac{(-1)^j}{j!}\left(\prod_{i=0}^{j-1} (n-i)(n+\al-i)\right)x^{n-j} \\
&= \sum_{j=0}^n \frac{(-1)^jn!(n+\al)!}{j!(n-j)!(n+\al-j)!}x^{n-j} \\
&= n! \sum_{i=0}^n (-1)^j \binom{n+\alpha}{j} \frac{x^{n-j}}{(n-j)!} = (-1)^nn!L_n^\alpha(x)
\end{align*}

The proof of Theorem 1 rests on a few important facts. First, the following properties are easily verified, when $\xi> 0,\alpha\ge 0$, and $k,\ell,m,n\in\mathbb{Z}_{\ge 0}$:

$$\int_{-\infty}^\infty H_k^\xi(x)H_\ell^\xi(x) e^{-x^2/4\xi}dx = 2k!\sqrt{\pi\xi}\delta_{k,\ell}$$

$$\int_0^\infty x^\alpha e^{-x} L_n^{(\alpha)}(x)L_m^{(\alpha)}(x)dx=\frac{\Gamma(n+\alpha+1)}{n!} \delta_{n,m}$$

It follows from the theory of orthogonal polynomials that both $H_k^\xi$ and $L_k^\alpha$ have $k$ distinct real roots, when $k>0$. We also need the following limit theorems, which are new as far as we can tell:

\begin{lem} Suppose $\xi\ne 0$, $k\in\mathbb{Z}_{>0}$, and $p$ is a polynomial so that $p(0)\ne 0$. Let $r_k(\xi)$ be the magnitude of the largest root of $H_k^\xi$. Then there is a positive real $h_0 = h_0(\xi,k,p,\alpha)$ such that when $0<h<h_0$, $e^{-h\Lambda_x}((x-\xi)^kp(x-\xi))$  has at least $k$ distinct roots in the interval $(\xi-2 r_k(\xi)\sqrt{h},\xi+2 r_k(\xi)\sqrt{h})$.
\end{lem}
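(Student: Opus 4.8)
\emph{Proof proposal.} The plan is to trivialise the operator by rescaling around $\xi$. Write $q(x):=(x-\xi)^{k}p(x-\xi)$, a polynomial of degree $N:=k+\deg p$, and expand $q(x)=\sum_{i=k}^{N}b_i(x-\xi)^i$ with $b_k=p(0)\ne0$. Under the substitution $x=\xi+\sqrt h\,t$ the operator $e^{-h\Lambda_x}$ should become a small perturbation of $e^{-\xi\,d^2/dt^2}$, so that $e^{-h\Lambda_x}q$, read in the variable $t$ and divided by a suitable nonzero constant, converges as $h\to0^+$ to $b_k\,e^{-\xi\,d^2/dt^2}t^{k}=b_kH_k^\xi(t)$. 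Since $H_k^\xi$ has $k$ distinct real roots, all of magnitude $\le r_k(\xi)$, a Hurwitz-type continuity argument should then produce $k$ distinct real roots near them, and undoing the substitution turns the window $|t|<2r_k(\xi)$ into $(\xi-2r_k(\xi)\sqrt h,\ \xi+2r_k(\xi)\sqrt h)$.

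To make the rescaling precise, let $S_h$ denote the operation $f(x)\mapsto f(\xi+\sqrt h\,t)$. The chain rule gives $S_h\Lambda_x=\tfrac1h M_h S_h$ with $M_h:=(\xi+\sqrt h\,t)\tfrac{d^2}{dt^2}+(\alpha+1)\sqrt h\,\tfrac{d}{dt}$; iterating, $S_h\Lambda_x^{\,j}=h^{-j}M_h^{\,j}S_h$, and since $\Lambda_x$ and $M_h$ lower degree these expansions terminate, so $S_h e^{-h\Lambda_x}=e^{-M_h}S_h$ on polynomials. Hence $S_h(e^{-h\Lambda_x}q)(t)=e^{-M_h}\bigl(q(\xi+\sqrt h\,t)\bigr)=h^{k/2}\,e^{-M_h}\widetilde Q_h(t)$, where $\widetilde Q_h(t):=\sum_{i=k}^{N}b_i h^{(i-k)/2}t^i$. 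On the finite-dimensional space of polynomials of degree $\le N$ the matrix of $M_h$ depends polynomially on $\sqrt h$ and specialises at $h=0$ to $\xi\,d^2/dt^2$, while $\widetilde Q_h\to b_k t^{k}$ coefficientwise; therefore $e^{-M_h}\widetilde Q_h\to e^{-\xi\,d^2/dt^2}(b_k t^k)=b_kH_k^\xi$ coefficientwise, hence uniformly on compact sets.

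It then remains to run the continuity argument. Fix $\varepsilon$ with $0<\varepsilon<r_k(\xi)$ small enough that the closed $\varepsilon$-disks about the $k$ simple real roots of $H_k^\xi$ are pairwise disjoint and contained in $\{|z|<2r_k(\xi)\}$. By Hurwitz's theorem there is $h_0>0$ such that for $0<h<h_0$ the real polynomial $e^{-M_h}\widetilde Q_h$ has exactly one root in each of these disks; a lone root of a real polynomial in a disk centred on $\mathbb R$ is real, so this gives $k$ distinct real roots of $e^{-M_h}\widetilde Q_h$ inside $(-2r_k(\xi),2r_k(\xi))$. Since $h^{k/2}\ne0$ these are exactly the roots of $S_h(e^{-h\Lambda_x}q)$, and the increasing affine map $t\mapsto\xi+\sqrt h\,t$ carries them to $k$ distinct roots of $e^{-h\Lambda_x}q$ in $(\xi-2r_k(\xi)\sqrt h,\ \xi+2r_k(\xi)\sqrt h)$.

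The step I expect to need the most care is reconciling the degrees: $e^{-M_h}\widetilde Q_h$ has degree $N$ for every $h>0$, but its limit $b_kH_k^\xi$ has degree only $k$. One must verify that this is harmless, i.e. that the $N-k$ extra roots escape to infinity as $h\to0^+$ (as they must, since the relevant coefficient behaves like $b_N h^{(N-k)/2}$) and therefore do not interfere with the $k$ bounded roots that Hurwitz controls. A minor separate caveat: when $k=1$ one has $H_1^\xi(t)=t$ and $r_1(\xi)=0$, so the open window degenerates; there the statement is to be read with the point $\{\xi\}$, near which $e^{-h\Lambda_x}\bigl((x-\xi)p(x-\xi)\bigr)$ indeed acquires a root.
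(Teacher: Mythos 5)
Your proposal is correct and follows essentially the same route as the paper: rescale via $x=\xi+\sqrt{h}\,t$, observe that the conjugated operator and the rescaled polynomial converge (polynomially in $\sqrt h$) to $e^{-\xi\,d^2/dt^2}$ and $p(0)t^k$ respectively, so the normalized polynomial tends to $p(0)H_k^\xi$, and then perturb; the only difference is that you conclude with Hurwitz's theorem (plus the lone-root-in-a-real-centred-disk trick) where the paper uses sign alternation at $k+1$ points and the intermediate value theorem, and your worry about the degree jump from $k$ to $N$ is indeed harmless since Hurwitz only counts zeros inside the fixed disks. Your $k=1$ caveat ($r_1(\xi)=0$ makes the stated window empty) is a genuine defect of the lemma's statement that afflicts the paper's own proof identically, so it does not count against you.
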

\begin{proof} Let $h = \eta^2$ with $\eta$ nonnegative. Let $P_h(x) = e^{-h\Lambda_x}((x-\xi)^kp(x-\xi))$. Then we have:

\begin{align*}P_h(x+\xi)&= \exp -h\left((x+\xi)\frac{d^2}{dx^2} + (\alpha + 1)\frac{d}{dx}\right)(x^kp(x))\\
&= \exp\left(-h\xi\frac{d^2}{dx^2} - hx\frac{d^2}{dx^2} - h (\alpha + 1)\frac{d}{dx}\right)(x^kp(x))
\end{align*}

Thus:

\begin{align*}\frac{P_h(\epsilon\eta+\xi)}{\eta^k}&= \exp\left(-\xi\frac{d^2}{d\epsilon^2} - \eta\epsilon\frac{d^2}{d\epsilon^2} - \eta(\alpha + 1)\frac{d}{d\epsilon}\right)(\epsilon^kp(\epsilon\eta)) \\
&= \exp\left(-\xi\frac{d^2}{d\epsilon^2} - \eta\Lambda_\epsilon\right)(\epsilon^kp(\epsilon\eta))
\end{align*}

This latter function is a polynomial in $\eta$ of degree $k$ whose coefficients are polynomials in $\epsilon$, $\xi$ and $\alpha$ independent of $\eta$. Hence, if $|\epsilon| < 2r_k(\xi)$, there is a positive $\eta_0$ such that for $|\eta| < \eta_0$,

$$\left|\frac{P_h(\epsilon\eta+\xi)}{\eta^k} - p(0)H_k^\xi(\epsilon)\right| \le C\eta$$

for some constant $C$. Also note that the scaled Hermite polynomial $H_k^\xi$ has $k$ distinct real roots  in the interval $(-2r_k(\xi),2r_k(\xi))$, and thus there are $k+1$ numbers $a_1<a_2<\cdots<a_{k+1}$ in that interval so that the signs of $H_k^\xi(a_j)$ alternate, for $j=1,2,\dots,k+1$. Thus for small $\eta$, the signs of $P_h(a_j\eta+\xi)$ alternate as well, so by the intermediate value theorem, $P_h(x)$ has at least $k$ roots in the interval $(\xi-2 r_k(\xi)\sqrt{h},\xi+2 r_k(\xi)\sqrt{h})$. This proves the lemma.

\end{proof}

\begin{lem} Suppose $k\in\mathbb{Z}_{>0}$, and $p$ is a polynomial so that $p(0)\ne 0$. Let $s_k(\alpha)$ be the magnitude of the largest root of $L_k^\alpha$. Then there is a positive real $h_0 = h_0(k,p,\alpha)$ such that when $0<h<h_0$, $e^{-h\Lambda_x}(x^kp(x))$  has at least $k$ distinct roots in the interval $(-2 s_k(\alpha)h,2 s_k(\alpha)h)$.
\end{lem}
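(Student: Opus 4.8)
The plan is to follow the strategy of Lemma 1, with the essential difference that the natural width of the cluster of roots accumulating at the origin is of order $h$ rather than $\sqrt{h}$. The reason is that, absent any translation by $\xi$, the operator $e^{-h\Lambda_x}$ sends a pure power to a rescaled Laguerre polynomial rather than a Hermite polynomial: repeating the computation of the introduction with a parameter $h$ gives $\Lambda_x^j x^k = \bigl(\prod_{i=0}^{j-1}(k-i)(k+\alpha-i)\bigr)x^{k-j}$, hence
\[
e^{-h\Lambda_x}(x^k) \;=\; k!\sum_{j=0}^k(-h)^j\binom{k+\alpha}{j}\frac{x^{k-j}}{(k-j)!} \;=\; h^k(-1)^k k!\,L_k^\alpha(x/h).
\]
Thus the roots of $e^{-h\Lambda_x}(x^k)$ are exactly $h$ times the roots of $L_k^\alpha$, all of magnitude at most $s_k(\alpha)h$; this is what the interval $(-2s_k(\alpha)h,2s_k(\alpha)h)$ is tracking.

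First I would set $Q_h(x) = e^{-h\Lambda_x}(x^kp(x))$ and substitute $x = h\epsilon$. Since $\tfrac{d}{dx} = \tfrac1h\tfrac{d}{d\epsilon}$ and $\tfrac{d^2}{dx^2} = \tfrac1{h^2}\tfrac{d^2}{d\epsilon^2}$, the operator
\[
-h\Lambda_x \;=\; -h\Bigl(x\tfrac{d^2}{dx^2} + (\alpha+1)\tfrac{d}{dx}\Bigr)
\]
becomes, after the substitution, exactly $-\Lambda_\epsilon$: the powers of $h$ cancel, in contrast to Lemma 1 where an extra factor $\eta$ killed the $\Lambda$-term in the limit. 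Here it survives at full strength, which is precisely why the limit is a Laguerre and not a Hermite polynomial. Consequently
\[
\frac{Q_h(h\epsilon)}{h^k} \;=\; e^{-\Lambda_\epsilon}\bigl(\epsilon^kp(h\epsilon)\bigr),
\]
which is a polynomial in $\epsilon$ whose coefficients are polynomials in $h$ and $\alpha$; writing $p(x)=\sum_m c_mx^m$ it equals $\sum_m c_mh^m(-1)^{k+m}(k+m)!\,L_{k+m}^\alpha(\epsilon)$, so setting $h=0$ leaves only the term $p(0)(-1)^kk!\,L_k^\alpha(\epsilon)$. Hence on the compact set $|\epsilon|\le 2s_k(\alpha)$ there is a constant $C=C(k,p,\alpha)$ with
\[
\left|\frac{Q_h(h\epsilon)}{h^k} - p(0)(-1)^kk!\,L_k^\alpha(\epsilon)\right| \;\le\; Ch
\]
for all sufficiently small $h>0$.

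To finish, recall that the orthogonality relation quoted above forces $L_k^\alpha$ to have $k$ distinct real roots, and it is classical that these are all positive, hence all of magnitude at most $s_k(\alpha)$ and so all contained in $(-2s_k(\alpha),2s_k(\alpha))$. Choosing one point strictly to the left of the smallest root, one strictly between each consecutive pair of roots, and one strictly between $s_k(\alpha)$ and $2s_k(\alpha)$, we obtain $k+1$ numbers $a_1<\cdots<a_{k+1}$ in $(-2s_k(\alpha),2s_k(\alpha))$ at which $L_k^\alpha$, and therefore $p(0)(-1)^kk!\,L_k^\alpha$ (as $p(0)\ne 0$), alternates in sign. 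By the uniform estimate, for $h$ small enough $h^{-k}Q_h(ha_j)$ has the same sign as $p(0)(-1)^kk!\,L_k^\alpha(a_j)$ for every $j$, and since $h^k>0$ this means $Q_h$ itself alternates in sign at the $k+1$ points $ha_1<\cdots<ha_{k+1}$, all of which lie in $(-2s_k(\alpha)h,2s_k(\alpha)h)$. The intermediate value theorem then yields a root of $Q_h = e^{-h\Lambda_x}(x^kp(x))$ in each of the $k$ disjoint intervals $(ha_j,ha_{j+1})$, i.e.\ at least $k$ distinct roots in $(-2s_k(\alpha)h,2s_k(\alpha)h)$.

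The only step I expect to require genuine care is the rescaling bookkeeping: recognizing that $x=h\epsilon$ (not $x=\sqrt{h}\,\epsilon$) is the correct substitution, and checking that it converts $-h\Lambda_x$ into $-\Lambda_\epsilon$ with no residual powers of $h$, so that the limiting polynomial is honestly $L_k^\alpha$ rather than a Hermite polynomial or the useless $\epsilon^k$. Once that is settled, the passage to the limit and the sign-change argument are word-for-word parallel to the proof of Lemma 1.
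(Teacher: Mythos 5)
Your proposal is correct and follows essentially the same route as the paper: substitute $x = h\epsilon$, observe that $-h\Lambda_x$ becomes $-\Lambda_\epsilon$ so the rescaled polynomial converges to a multiple of $L_k^\alpha(\epsilon)$ with $O(h)$ error, and conclude by sign alternation at $k+1$ points and the intermediate value theorem. Your version is in fact slightly more careful than the paper's, which omits the constant $(-1)^k k!$ in front of $p(0)L_k^\alpha(\epsilon)$ (harmless for the sign argument, as you note).
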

\begin{proof} The proof is almost identical to that of Lemma 1. Let $P_h(x) = e^{-h\Lambda_x}(x^kp(x))$. Then we have:

\begin{align*}\frac{P_h(\epsilon h)}{h^k}&= \exp\left( - \epsilon\frac{d^2}{d\epsilon^2} - (\alpha + 1)\frac{d}{d\epsilon}\right)(\epsilon^kp(\epsilon h)) \\
&= \exp\left(-\Lambda_\epsilon\right)(\epsilon^kp(\epsilon h))
\end{align*}

This latter function is a polynomial in $h$ of degree $k$ whose coefficients are polynomials in $\epsilon$ and $\alpha$ independent of $h$. Hence, if $|\epsilon| < 2s_k(\alpha)$, there is a positive $h_*$ such that for $|h| < h_*$,

$$\left|\frac{P_h(\epsilon h)}{h^k} - p(0)L_k^\alpha(\epsilon)\right| \le Ch$$

for some constant $C$. Also note that $L_k^\alpha(\epsilon)i$ has $k$ distinct real roots  in the interval $(-2s_k(\al),2s_k(\al))$, and thus there are $k+1$ numbers $a_1<a_2<\cdots<a_{k+1}$ in that interval so that the signs of $L_k^\al(a_j)$ alternate, for $j=1,2,\dots,k+1$. Thus for small $h$, the signs of $P_h(a_j\eta)$ alternate as well, so by the intermediate value theorem, $P_h(x)$ has at least $k$ roots in the interval $(-2 s_k(\alpha)h,2 s_k(\alpha)h)$. This proves the lemma.

\end{proof}

Now we are ready to prove Theorem 1. Fix a  non-constant real-rooted polynomial $P(x) = \sum_{i=0}^N a_i x^i$, and suppose that $$P(x) = \prod_{i=1}^n (x-\xi_i)^{m_i}$$ where $m_0,m_1,\dots,m_n\in\mathbb{Z}_{> 0}$, and $\xi_1<\xi_2<\cdots<\xi_n$ are real numbers. 

Lemmas 1 and 2 imply that for any $i$, there exists $h_i$ so that for all $h\in (0,h_i)$, the polynomial $e^{-h\Lambda_x}P(x)$ has $m_i$ roots in the interval $I_i(h) = (\xi_i-2 r_k(\xi_i)\sqrt{h},\xi_i+2 r_k(\xi_i)\sqrt{h})$ if $\xi_i\ne 0$, and $I_i(h) = (-2 s_k(\alpha)h,2 s_k(\alpha)h)$ if $\xi_i = 0$. Choose a positive real number $h'$ so that:
\begin{enumerate}
\item{$0<h'<h_i$ for all $i$, and}
\item{the intervals $I_i(h')$ are disjoint.}

\end{enumerate}
Then for every $h\in (0,h')$, $e^{-h\Lambda_x}P(x)$ has at least $m:= m_1+m_2+\cdots+m_n$ distinct roots. Since the degree of $e^{-h\Lambda_x}P(x)$ is that of $P(x)$, namely $m$, it follows that $e^{-h\Lambda_x}P(x)$ has $m$ real roots with multiplicity $1$.

We now define the set:

$$H(P) = \{h'\in (0,\infty)\ \ | \ \ e^{-h\Lambda_x}P(x)\textrm{ has $m$ simple real roots for all $h\in (0,h')$}\}$$

Then the above argument shows that $H(P)$ is nonempty and open in $(0,\infty)$. In addition, it is clear from the definition that $H(P)$ is either $(0,\infty)$ or an interval of the form $(0,y)$ for some positive real $y$. If the latter were true, then $e^{-h\Lambda_x}P(x)$ would have $m$ simple real roots for all $h\in (0,h')$, for all $h'<y$. However, since $\cup_{h'<y} (0,h') = (0,y)$, it follows that $y\in H(P)$, a contradiction. Hence $H(P) = (0,\infty)$.  However, we know that $P(x) = \sum_{i=0}^N a_i x^i$, so that

$$e^{-\Lambda_x}P(x) = \sum_{i=0}^N (-1)^i i!a_iL_i(x)$$

Since $1\in H(P)$, it follows that the latter polynomial has $N$ distinct real roots, and Theorem 1 is proven.

\section*{Bibliography}

1. Fisk, Steve. \emph{The Laguerre polynomials preserve real-rootedness}. \verb"arXiv:0808.2635"

2. Szwarc, Ryszard. \emph{Orthogonal Polynomials and Banach Algebras}. In \textit{Inzell Lectures on Orthogonal Polynomials}, edited by Wolfgang zu Castell, Frank Filbir, and Brigitte Forster.

\end{document}